\author[A.Gholampour]{Amin Gholampour}
\address{Department of Mathematics\\ University of Maryland}
\email{amingh@umd.edu}
\author[D.Karp]{Dagan Karp}
\address{Department of Mathematics\\ Harvey Mudd College}
\email{dagan.karp@hmc.edu}
\author[S.Payne]{Sam Payne}
\address{Department of Mathematics\\
Yale University}
\email{sam.payne@yale.edu}
\newtheorem{theorem}{Theorem}
\newtheorem{corollary}[theorem]{Corollary}
\newtheorem{lemma}[theorem]{Lemma}
\newtheorem{proposition}[theorem]{Proposition}
\newtheorem{rem1}[theorem]{Remark}
\newenvironment{remark}{\begin{rem1}\em}{\end{rem1}}
\newcommand{\A}{\mathbb{A}}
\newcommand{\CC} {{\mathbb C}}          % complex numbers
\newcommand{\RR} {{\mathbb R}}		% real numbers
\newcommand{\ZZ} {{\mathbb Z}}		% integers
\newcommand{\QQ} {{\mathbb Q}}		% rationals
\newcommand{\PP}{\mathbb{P}}
\newcommand{\TT}{\mathbb{T}}
\newcommand{\cZ}{\mathcal{Z}}
\newcommand{\cO}{\mathcal{O}}
\newcommand{\cX}{\mathcal{X}}
\newcommand{\vdim}{\text{{vdim}}}
\newcommand{\ev}{\text{ev}}
\newcommand{\M}{\overline{{M}}}
\renewcommand{\O}{\mathcal{O}}
\newcommand{\vir}{\text{{vir}}}
\newcommand{\X}{\widehat{X}}
\newcommand{\te}{\tilde{e}}
\begin{document}

\title{Cremona Symmetry in Gromov-Witten Theory}

\pagestyle{plain}

\begin{abstract}
We establish the existence of a symmetry within the Gromov-Witten
theory of $\CC \PP^{n}$ and its blowup along points. The nature of
this symmetry is encoded in 
the Cremona transform and its resolution, which lives on the toric
variety of the permutohedron. This symmetry expresses some 
difficult to compute invariants in terms of others less
difficult to compute. We focus on enumerative implications; in
particular this technique yields a one line proof of the uniqueness of
the rational normal curve.

Our method involves a study of the toric geometry of the
permutohedron, and degeneration of Gromov-Witten invariants.

\end{abstract}

\maketitle

\section{Introduction}\label{sec: intro}

\subsection{Overview} We work over $\CC$ throughout. How many rational curves
of degree $d$ pass through $r$ general points in $\PP^n$? Call this
number $N^{n}_{d,r}$. Determining $N^{n}_{d,r}$ is at the heart of
classical enumerative geometry and is captured by the stationary
genus-0 Gromov-Witten theory of $\PP^n$. 

For a smooth projective variety $X$, a collection of cohomology
classes $\gamma_1 , \ldots, \gamma_r \in H^*(X,\ZZ)$, and an effective
curve class $\beta \in H_2(X,\ZZ)$, the genus-$g$, class $\beta$,
Gromov-Witten invariant of $X$ with insertions $\{\gamma_i \}$ is
denoted 
\[
\langle \gamma_1 ,\ldots , \gamma_r \rangle ^X _{g,\beta}.
\]
These invariants contain enumerative information, but only in some cases
do they precisely correspond to the number of genus-$g$ curves in $X$
of class $\beta$ intersecting the Poincar\'e dual of each
$\gamma_i$. Such invariants are simply called {\emph{enumerative}}.

The term {\emph{stationary}} refers to Gromov-Witten invariants with
only point insertions. Since $\PP^n$ is convex, its 
(genus-0) Gromov-Witten theory is enumerative. Hence, computing such
Gromov-Witten invariants yields exact enumerative information,
\[
\langle pt^r \rangle^{\PP^n} _{0,d} = N^{n} _{d,r.}
\]

For this reason, and others, the genus-0 Gromov-Witten theory of $\PP^{n}$ is of basic
interest in enumerative geometry and Gromov-Witten theory, and 
is well studied; indeed it has been completely determined using several
different methods. These include two of the most important tools in
Gromov-Witten theory: localization and Virasoro constraints. 

Computing Gromov-Witten invariants via localization consists of a two
step process. Graber-Pandharipande~\cite{GrPa99} showed 
that the (all genus) Gromov-Witten theory of any 
nonsingular complex projective variety with a strong torus action
may be reduced to Hodge integrals via virtual
localization. In turn, Faber-Pandharipande~\cite{FaPa00} provides an
algorithm computing such Hodge integrals. 

Alternatively,
one may compute using Virasoro constraints.
The Virasoro constraints are an infinite set of differential equations
which conjecturally are satisfied by the Gromov-Witten generating
series of a given target space. The Gromov-Witten invariants of the
target are completely determined by these constraints. The Virasoro
conjecture has been 
proved for $\PP^n$; the Virasoro conjecture for target curves, including
$\PP^{1}$ was proved by Okounkov-Pandharipande~\cite{OkPa06},
and the general case was proved by Givental~\cite{Gi01}.

Here we introduce a new method for the computation of certain genus-0
Gromov-Witten invariants of $\PP^{n}$, and its 
blowup along points. This method
exploits a 
new symmetry of the invariants, which arises from the geometry of the
Cremona transform and its resolution, which lives on the toric variety
associated to the permutohedron, $\Pi_n$. This symmetry expresses some
difficult to compute invariants in terms 
of others that are less difficult to compute. In this way, where it
applies, it is often very
computationally effective, yielding a computational tool for
enumerative geometry. Additionally, this symmetry has not been 
observed using other techniques, including those mentioned
above, and as such yields new insight into the structure of the
genus-0 Gromov-Witten theories of $\PP^{n}$ and its blowup along
points. 

The permutohedral variety is of independent interest and
admits a modular description. The permutohedral variety was first
constructed as an iterated toric blowup by Kapranov in~\cite[Section
4.3]{Ka93}. Losev and Manin in~\cite{LoMa00} then proved $X_{\Pi_{(n-1)}}$
is isomorphic to the moduli space $\overline{M}_{0,2|n}$ of chains of
$\PP^1$'s with marked points $x_0\neq x_{\infty}$ and $y_1
, \ldots, y_n$, where the points $y_i$ may collide but not with $x_j$;
here we use the notation of~\cite{BeMi14,MOP11}. In addition, the
permutohedral variety is isomorphic to the Hassett space
$\overline{M}_{0,(1,1,1/n, \ldots, 1/n)}$ of weighted pointed curves,
here of genus zero, with two points of weight one and $n$ points of
weight $1/n$~\cite{MOP11}. The permutohedral variety (by definition) admits
an $S_n$ action by permuting the $n$ points generating the
permutohedron. But the permutohedron is also symmetric about the
origin, and hence admits an $S_2$ action. (This symmetry resolves the
Cremona transform on projective space, as explained below.) The cohomology of the
permutohedral variety is thus a representation of $S_2 \times
S_n$. Bergrst\"om and Minabe study the cohomology of the permutohedral
variety, compute the character of this representation, and provide an
excellent introduction to the subject in~\cite{BeMi14}. Thus, our
description of the action and geometry of Cremona symmetry on the cohomology of the
permutohedral variety may also be interpreted in terms of the
cohomology of the associated Losev-Manin and Hassett spaces.

This work builds upon the work of many others. The Cremona transform
was first studied in the context of Gromov-Witten theory on $\PP^{2}$
by Crauder-Miranda~\cite{CrMi95} and
G\"ottsche-Pandharipande~\cite{GoPa98}. This technique on $\PP^{2}$
was used with success by Bryan-Leung~\cite{BL}. The $\PP^3$ case
was proved by Bryan and the first author~\cite{BK}, used by the first
author and Liu-Mari\~no~\cite{KLM}, and was inspired by the beautiful work of
Gathmann~\cite{Ga01}. 

Cremona Symmetry is an example of {\emph{toric symmetry}} as a computational tool in Gromov-Witten theory. Toric symmetry of $\PP^{3}$ and $\PP^{1}\times \PP^{1}\times \PP^{1}$ have been studied in~\cite{KR} and~\cite{KRRW}, respectively. Cremona symmetry of $\PP^{n}$ is the first higher-dimensional example of toric symmetry in Gromov-Witten theory to be studied.

\subsection{Summary}
In what remains of the
introduction we establish the notation necessary to state the main
results, proceed to do so and discuss applications of the results to
enumerative geometry. We then move towards a proof of our results,
beginning with a description of the Cremona transform
on $\PP^{n}$, which leads to a discussion of the blowup of $\PP^{n}$
along points, which we call $X$, the permutohedron $\Pi_{n}$, and its toric variety $X_{\Pi_{n}}$.

We then
study the geometry and intersection theory of the permutohedral
variety, $X_{\Pi_{n}}$, and 
its blowup along points, $\X$. % We find a packaging
%for the ring structure of the cohomology of $X_{\Pi_{n}}$ and $\X$ which
%allows for efficient computation of intersection products, which may
%be of independent interest. In particular, it may be used to recover results of
%Postnikov~\cite[Theorems 3.1, 3.2]{Po05} in the special case
%concerning the volume of the regular permutohedron.
We find a symmetry of the polytope
of $X_{\Pi_{n}}$ which yields a nontrivial action on the cohomology of
$\X$. Since Gromov-Witten invariants are functorial, this action on
cohomology ascends to the Gromov-Witten theory of $\X$, producing a
symmetry of the Gromov-Witten invariants of $\X$. This proves
Theorem~\ref{theorem: cremona}. 

However, we are interested in the invariants of $\PP^{n}$ and $X$, its blowup along points, as opposed to the invariants of $\X$, which is a much more complicated iterated blowup. So, we must prove that invariants of $\X$ descend to $X$. We use degeneration to prove Corollary~\ref {cor: rational-normal-curve}.

The cut-down moduli stack introduced by Bryan-Leung~\cite{BL} allows
one to trade point insertions for conditions of passing through
blowup points (Proposition~\ref{proposition: cut down stack}). Thus the stationary genus-$0$ invariants of 
$\PP^{n}$ are equal to certain invariants of $X$;
Theorem~\ref{theorem: cremona}  provides a symmetry of the invariants
of  $X$, thus exposing a symmetry of the invariants of $\PP^{n}$
itself.

\subsection{Preliminaries}
Let $X$ be a nonsingular complex projective variety, and
$\beta \in H_{2} (X;\ZZ )$ be a curve class in $X$. The moduli stack
$\M_{0,r} (X,\beta)$ parametrizes isomorphism classes of stable maps
\[
f: C \longrightarrow X
\]
from possibly nodal rational curves $(C,p_{1},\dotsc ,p_{r})$ with $r$
marked points  to $X$ representing 
$\beta$. This moduli space comes equipped with a virtual fundamental
class $[\M_{0,r} (X,\beta)]^{\vir}$ of dimension 
\[
\vdim  \left( \M_{0,r} (X,\beta)  \right) = (\dim X -3) -K_{X}\cdot \beta +r  
\]
where $K_{X}$ is the canonical divisor class on $X$.

The genus-0, primary, stationary Gromov-Witten invariants of $X$ are
defined by integration over this virtual fundamental class, 
\[
\langle pt^{r} \rangle^{X}_{\beta} = \int_{[\M_{0,r} (X,\beta)]^{\vir}}
\prod_{i=1}^{r} \ev_{i}^{*} (pt),
\]
where $\ev_{i}: \M_{0,r} (X,\beta) \rightarrow X$ is the $i^{th}$
evaluation map given by
\[
[f: (C,p_{1},\dotsc ,p_{r}) \rightarrow X] \longmapsto f (p_{i})
\]
and $pt \in H^{0} (X;\QQ)$ denotes
the class Poincar\'e dual to a point in $X$. 

For foundational results in Gromov-Witten theory, we refer the reader to the excellent book~\cite{Hori-et-al}. 

Let $\pi: \tilde{X} \rightarrow X$ be the blowup of $X$ along a subvariety
$Z \subset X$. An effective curve class  $\tilde{\beta}$ in 
$H_{2} (\tilde{X};\ZZ )$ is 
{\emph{nonexceptional}} with respect to $X$ if
\[
\langle pt^{r} \rangle^{\tilde{X}}_{\tilde{\beta}} = \langle pt^{r} \rangle^{X}_{\pi_{*}\tilde{\beta}}. 
\]
In this case, we also refer to $ \beta = \pi_{*}\tilde{\beta} \in H_{2}(X;\ZZ)$ as nonexceptional.

\subsection{Main Results} 

\begin{theorem}\label{theorem: cremona}
Let $X$ be the blowup of $\PP^{n}$ along $m$
general points $( p_{1},\dotsc ,p_{m}) \in \PP^{n}$, where $m>n+1$. 
Let $h \in H_{2} (X; \ZZ)$ denote the
pullback of the class of a general line in $\PP^{n}$, and let 
$e_{i} \in H_{2} (X)$ denote the class of a line in the exceptional
divisor over $p_{i}$. 

Let $\beta = dh -\sum a_{i}e_{i} \in H_{2} (X;\ZZ)$, for $d \in
\ZZ_{\geq 0}$ and $a_{i}\in \ZZ $. Suppose $\beta$ is a nonexceptional class
in $X$ with respect to $\X$ . Then 
\[
\boxed{ 
\langle pt ^{k} \rangle^{X}_{\beta} = \langle pt^{k}
\rangle^{X}_{\beta'}} 
\]
where $\beta '= d'h-\sum a_{i}' e_{i}$, and $d', a_{i}' \in \ZZ$ are
given by:
\begin{align*}
d' &= nd - (n-1) \sum_{i=1}^{n+1} a_{i}\\
a_{i}' &=
\begin{cases}
d - \displaystyle{\sum_{\substack{j \neq i \\ j \leq n+1 }}} a_{j} & 
1 \leq i\leq n+1\\
a_{i} & i >  n+1 .
\end{cases}
\end{align*}
\end{theorem}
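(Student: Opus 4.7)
The approach is to realize the Cremona involution, which is only birational on $\PP^n$, as the shadow of a regular toric involution $\sigma$ of the permutohedral variety $X_{\Pi_n}$; lift $\sigma$ to an automorphism $\tilde\sigma$ of the iterated blowup $\X$; and transport the resulting $\tilde\sigma$-invariance of $\X$-invariants down to $X$ using the nonexceptional hypothesis.

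First, the antipodal symmetry $x\mapsto -x$ of $\Pi_n$ induces a toric automorphism $\sigma\colon X_{\Pi_n}\to X_{\Pi_n}$. A check on the fan shows that its birational shadow on $\PP^n$ is the standard Cremona map $[x_0:\cdots:x_n]\mapsto[x_0^{-1}:\cdots:x_n^{-1}]$, and that $\sigma$ exchanges the $n+1$ toric divisors contracting to coordinate hyperplanes of $\PP^n$ with the $n+1$ toric divisors contracting to the coordinate points. Since the remaining blowup points $p_{n+2},\ldots,p_m$ are general, their lifts to $X_{\Pi_n}$ and the $\sigma$-images of those lifts form $2(m-n-1)$ disjoint torus points; blowing them up simultaneously produces $\X$ together with a biregular lift $\tilde\sigma\in\operatorname{Aut}(\X)$.

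Second, I would compute $\tilde\sigma_*$ on $H_2(\X)$ in terms of the pullback basis $\{h,e_1,\ldots,e_m\}$ from $X$. The exchange of coordinate hyperplanes with coordinate-point exceptional divisors, transcribed via intersection theory on $X_{\Pi_n}$, produces precisely $d'=nd-(n-1)\sum_{i=1}^{n+1}a_i$ and $a_i'=d-\sum_{j\ne i,\,j\le n+1}a_j$ for $i\le n+1$, while the symmetric treatment of $p_{n+2},\ldots,p_m$ forces $a_i'=a_i$ for $i>n+1$. As a sanity check, $h\mapsto nh-\sum_{i=1}^{n+1}e_i$ records the classical fact that a general line is taken by Cremona to a rational normal curve through the $n+1$ base points.

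Finally, functoriality of Gromov-Witten invariants under $\tilde\sigma$ yields
\[
\langle pt^k\rangle^{\X}_{\tilde\beta}=\langle pt^k\rangle^{\X}_{\tilde\sigma_*\tilde\beta}
\]
for any class $\tilde\beta\in H_2(\X)$. Choosing $\tilde\beta$ to be the lift of $\beta$ witnessing the nonexceptional hypothesis identifies the left-hand side with $\langle pt^k\rangle^X_\beta$, while the composition $\pi\circ\tilde\sigma\colon\X\to X$ exhibits $\X$ as an iterated blowup of $X$ Cremona-interchanged with $\pi$, so the symmetry of the general configuration makes $\tilde\sigma_*\tilde\beta$ nonexceptional and identifies the right-hand side with $\langle pt^k\rangle^X_{\beta'}$. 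The main obstacle and technical heart of the argument is the intersection-theoretic bookkeeping in step two, relating the iterated-blowup basis of $H_2(\X)$ intrinsic to the permutohedral construction with the pullback basis from $X$; the toric-geometric sections of the paper are devoted to exactly this calculus, after which the theorem follows by the short chain above.
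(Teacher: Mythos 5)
Your overall route is the paper's: resolve the Cremona map by the antipodal symmetry of the permutohedron, transport Gromov--Witten invariants of $\X$ along the induced map by functoriality, compute the pushforward on $H_{2}$ in the basis $\{\hat h,\hat e_{i}\}$, and descend to $X$ via the nonexceptional hypothesis. The pushforward formulas you assert are exactly those the paper derives, and the functoriality step is identical.

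The one genuine gap is your treatment of the $m-n-1$ general points $p_{n+2},\dotsc,p_{m}$. The Cremona involution does not fix a general point, so to manufacture an honest automorphism you blow up these points \emph{together with} their $\sigma$-images. But the resulting variety carries $2(m-n-1)$ extra exceptional divisors and is not $\X$; consequently (i) the hypothesis that $\beta$ is nonexceptional \emph{with respect to $\X$} does not apply to it, and (ii) your involution sends $\hat e_{i}$ to the exceptional class over $\sigma(p_{i})$ rather than back to $\hat e_{i}$, so the claim $a_{i}'=a_{i}$ for $i>n+1$ does not follow from functoriality alone. The repair (left implicit in the paper, which simply writes $\tau\colon X_{n-2}(m)\to X_{n-2}(m)$ as if it were an automorphism) is to regard $\tau$ as an isomorphism from $\X$, built on the points $p_{n+2},\dotsc,p_{m}$, onto the analogous iterated blowup built on the image points $\sigma(p_{n+2}),\dotsc,\sigma(p_{m})$, and then invoke deformation invariance of Gromov--Witten invariants: both targets are blowups of the permutohedral variety at $m-n-1$ general points, hence members of a smooth family, so their invariants in corresponding classes agree and the labels $\hat e_{i}$ may be matched up. With that substitution in place of the doubled blowup, your argument closes.
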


\begin{corollary}\label{cor: rational-normal-curve}
The class $\beta = nh - e_{1} - \dotsb -e_{n+3} \in H_{2} (X;\ZZ )$ is
nonexceptional. Therefore
\[
 \langle \; \rangle_{nh -e_{1}-\dotsb -e_{n+3}}^{X}
 = \langle \; \rangle_{h-e_{n+2}-e_{n+3}}^{X}  =1.  
\]
\end{corollary}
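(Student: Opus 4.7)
The plan is to apply Theorem~\ref{theorem: cremona} with $k=0$, $d=n$, and $a_{1}=\cdots=a_{n+3}=1$, and then evaluate the right-hand side directly. A quick computation with the boxed formulas gives
\[
d'=n^{2}-(n-1)(n+1)=1,\qquad a_{i}'=n-n=0 \text{ for } 1\le i\le n+1,\qquad a_{n+2}'=a_{n+3}'=1,
\]
so $\beta'=h-e_{n+2}-e_{n+3}$. This class is represented by the strict transform of the unique line through $p_{n+2}$ and $p_{n+3}$; by convexity of $\PP^{n}$, together with the fact that for generic points this line avoids the other blowup centers, the virtual-dimension-zero moduli space $\M_{0,0}(X,\beta')$ is a single reduced point and $\langle\,\rangle^{X}_{\beta'}=1$.

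The main obstacle is verifying the hypothesis of Theorem~\ref{theorem: cremona}, namely that $\beta=nh-\sum_{i=1}^{n+3} e_{i}$ is nonexceptional with respect to $\X$: one needs a curve class $\tilde\beta\in H_{2}(\X;\ZZ)$ lifting $\beta$ for which $\langle\,\rangle^{X}_{\beta}=\langle\,\rangle^{\X}_{\tilde\beta}$. Following the strategy foreshadowed in the introduction, I would prove this by degeneration, specializing $X$ into a transverse union along a suitable divisor so that the higher-codimension centers of $\X\to X$ peel off onto their own components. The degeneration formula should then reduce the equality to showing that no stable map of class $\tilde\beta$ has image meeting those centers in positive dimension. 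The delicate point is the dimension bookkeeping: one must check that for a generic configuration, the rational normal curve of degree $n$ through $p_{1},\ldots,p_{n+3}$ meets each linear span of a subset of $\{p_{1},\ldots,p_{n+1}\}$ transversely in the expected number of points, so that no excess contributions appear on the exceptional divisors introduced by the permutohedral resolution. Once this is in hand, chaining the two equalities yields the corollary and recovers the classical uniqueness of the rational normal curve through $n+3$ general points in $\PP^{n}$.
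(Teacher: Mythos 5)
Your application of Theorem~\ref{theorem: cremona} is correct: with $d=n$ and $a_1=\cdots=a_{n+3}=1$ one indeed gets $d'=1$, $a_i'=0$ for $i\le n+1$, and $a_{n+2}'=a_{n+3}'=1$, so $\beta'=h-e_{n+2}-e_{n+3}$, whose invariant is $1$. The gap is in the other half: you correctly identify that the whole content of the corollary is the nonexceptionality of $\beta=nh-e_1-\cdots-e_{n+3}$, and you correctly name degeneration (deformation to the normal cone of the blowup centers, iterated up the tower $X_0\to X_1\to\cdots\to\X$) as the tool, but you never actually close the argument. What you offer instead is a plan whose ``delicate point'' is a transversality statement about the rational normal curve meeting the linear spans of subsets of $\{p_1,\dots,p_{n+1}\}$ in the expected number of points. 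That is not what makes the degeneration formula collapse, and proving it would not by itself suffice: the degeneration formula is a sum over \emph{splittings of the curve class} $\beta=\beta_1+\beta_2$ into effective classes on the two components $X_{j+1}$ and $W_\alpha$, weighted by relative invariants, and one must rule out every such splitting (including contributions from components of the moduli space mapping into the exceptional loci), not just show that one particular embedded curve meets the centers transversely.

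The missing idea, which is the actual engine of the paper's proof, is purely homological: $\beta=nh-e_1-\cdots-e_{n+3}$ spans an \emph{extremal ray} of the cone of effective curves. Combining this with the explicit relations in $A_1(X_{j+1})$ and $A_1(W_\alpha)$ (Lemma~\ref{relations} and its companion), any nontrivial splitting $\beta=\beta_1+\beta_2$ with $\beta_2$ effective and supported on $W_\alpha$ forces $\beta_1$ to contain the non-effective combination $-\bigl(e_\gamma-\sum_\epsilon e_\epsilon\bigr)$, so $\beta_1$ cannot be effective unless $\beta_2=0$ (Proposition~\ref{exray}). Hence the only term surviving in the degeneration formula is the empty relative partition, giving $\langle\;\rangle^{X_j}_\beta=\langle\;\rangle^{X_{j+1}}_\beta$ at every stage with no genericity or excess-intersection analysis whatsoever. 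Your geometric route might be completable, but as written it replaces the one step that needs proof with a heuristic, and it would in any case require substantially more work (controlling virtual classes of maps into the exceptional divisors) than the cone-theoretic argument the paper uses.
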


\begin{remark} 
The invariant $ \langle \; \rangle_{nh -e_{1}-\dotsb -e_{n+3}}^{X} $ counts the number of rational curves in $\PP^{n}$ of degree $n$ and passing through $n+3$ general points. The rational normal curve has these properties. The central equality is, of course, an application of Theorem~\ref{theorem: cremona}.
The last equality corresponds to the fact that there is a unique line
through two general points in $\PP^{n}$.

So, Corollary~\ref{cor: rational-normal-curve} is a Gromov-Witten theoretic proof of the uniqueness of the rational normal curve. This is of course very classical; a good reference is a combination of two theorems in Harris~\cite[Theorems 1.18, 18.9]{Har}.
\end{remark}

\section{The Cremona Transform}\label{section: cremona}
The Cremona transform is the rational map
\[
\PP^{n} \dashrightarrow \PP^{n}
\]
defined by
\[
(x_{0} : x_1:  \dotsb : x_{n}) \mapsto (x_{1} \dotsb x_{n}:
x_0x_2\dotsb x_n: \dotsb :
\prod_{j \neq i} x_{j}:\dotsb : x_{0}\dotsb x_{n-1}).
\]
Note that on the open set $U =\{(x_{0} : x_1:  \dotsb : x_{n}) \in \PP^{n} \mid x_{i} \neq 0 \}$, Cremona has the
familiar form
\[
(x_{0}: x_{1}: \dotsb :x_{n}) \mapsto \left( \frac{1}{x_{0}}: \frac{1}{x_{1}}: \dotsb :\frac{1}{x_{n}}
\right).
\]

Note that the Cremona transform is undefined precisely on the set
\[
\{(x_{0}:\dotsb :x_{n}) \;|\; x_{j} =x_{k}=0 \text{ for some } j \neq k  \},
\]
which is the union of all torus-invariant subvarieties of $\PP^{n}$ of codimension at least 2 under
the standard torus
$\TT  =(\CC ^{\times})^{n}$ action on $\PP^{n}$, given by
\[
(\lambda_{1},\dotsc ,\lambda_{n})\cdot (x_{0}:\dotsb :x_{n}) =
(x_{0}: \lambda_{1}x_{1}:\dotsb :\lambda_{n}x_{n}).
\]
So, we resolve the Cremona transform by iteratively blowing up along these $\TT$-invariant subvarieties. 

Let $X_{0}= X_{0}(m)$ denote the blowup of $\PP^{n}$ at $m$ distinct points, where $m \geq n+1$.  Note when $m > n+1$, $X_{0}$ is not a toric variety. However we perform an iterated blowup of $\PP^{n}$, beginning with $X_{0}$, of a very toric flavor. 

Let $\{p_{0},\dotsc ,p_{n} \}$ denote the torus fixed points of $\PP^{n}$ under the standard action of $\TT = (\CC^{\times})^{n}$. The $\TT$-invariant subvarieties of $\PP^{n}$ are indexed by subsets of $\{0,\dotsc ,n \}$, and we perform an iterated blowup of $X_{0}$ along (the proper transforms of) these subvarieties.

First, let $\cZ_{0} = \{p_{0},\dotsc ,p_{m} \}$ and, again, let 
\[
X_{0} = \text{Bl}_{\cZ_{0}} \PP^{n}
\]
denote the blowup of $\PP^{n}$ along $\cZ_{0}$. 

We now iterate, and blowup $X_{j}=X_{j}(m)$ along the proper transform of $\TT$-invariant subvarieties of dimension $j+1$ to obtain $X_{j+1}=X_{j+1}(m)$. For $0\leq j < n-2$, let $Z_{i_{0},\dotsc ,i_{j+1}} \subset X_{j}$ denote the proper transform of the $(j+1)$-dimensional $\TT$-invariant subvariety of $\PP^{n}$ through $\{p_{i_{0}},\dotsc ,p_{i_{j+1}} \} \subseteq \{p_{0},\dotsc ,p_{n} \}$. Now set 
\[
\cZ_{j+1} = \bigcup_{\substack{ \alpha \subset \{0,\dotsc ,n \} \\ |\alpha |=j+2}} Z_{\alpha} \subset X_{j}. 
\]
To complete the construction, let $X_{j+1} = \text{Bl}_{\cZ_{j+1}}X_{j}$ denote the blowup of $X_{j}$ along $\cZ_{j+1}$. So we have the iterated sequence of blowups
\[ \boxed{ 
\hat{X} = X_{n-2} \xrightarrow{\pi_{n-2}} X_{n-1} \xrightarrow{\pi_{n-1}} \dotsb   \xrightarrow{\pi_{2}} X_{1} \xrightarrow{\pi_{1}} X_{0}\xrightarrow{\pi_{0}} \PP^{n} .} 
\]

For example $X_{1}$ is the blowup of $X_{0}$ along $$\cZ_{1} = Z_{0,1}\cup Z_{0,2}\cup \dotsb \cup Z_{n-1,n},$$ where $Z_{i,j}$ is the proper transform in $X_{0}$ of the torus invariant line $l_{ij}$ in $\PP^{n}$ through $p_{i}$ and $p_{j}$.

In order to understand the geometry of this construction, it is useful to consider the toric case $m=n+1$.

\section{Toric Construction}\label{sec: toric}
For the basics of toric geometry, including all of the toric constructions here, we refer the reader to Fulton's classic text~\cite{Fu93}.

\subsection{The base space {$\PP^{n}$}}
We use the standard fan $\Sigma = \Sigma_{\PP^{n}}$ for $\PP^{n}$, with primitive generators $\rho_{0}= (-1,\dotsc ,-1)$, $\rho_{1}= (1,0,\dotsc ,0)$ and similarly $\rho_{i}$ is the $i^{th}$ standard basis vector of $\RR^{n}$ for all $1\leq i \leq n$. The cones of $\Sigma$ are generated by the proper subsets of $\{\rho_{0},\dotsc ,\rho_{n} \}$. 

With this assignment, $\TT$-fixed points correspond to cones as follows: 
\begin{align*}
p_{0} & \longleftrightarrow \langle \rho_{1},\dotsc ,\rho_{n} \rangle \\
p_{1} & \longleftrightarrow \langle \rho_{0}, \rho_{2},\dotsc ,\rho_{n}\rangle \\
\vdots & \\
p_{n} & \longleftrightarrow \langle \rho_{0},\dotsc ,\rho_{n-1}\rangle .
\end{align*}

\subsection{The blowup at points, $X_{0}$} Blowing up the point $p_{0}$ corresponds to subdividing $\Sigma$ along the cone $\langle \rho_{1},\dotsc ,\rho_{n} \rangle$, creating a new primitive generator
\[
\rho_{1,\dotsc ,n} = \rho_{1} + \dotsb +\rho_{n} = (1,\dotsc ,1).
\]
To obtain $\Sigma_{X_{0}}$, we subdivide $\Sigma_{\PP^{n}}$ along all cones corresponding to $\TT$-fixed points. 

The cohomology of $X_{0}$ is then generated by $\{D_{\alpha} \}$, where $\rho_{\alpha}$ is a primitive generator in $\Sigma_{X_{0}}$. The primitive generators introduced as a result of subdivision correspond to exceptional divisors. In particular, we have the following isomorphism of presentations of the cohomology ring of $X_{0}$.
\begin{align*}
D_{i} & \longleftrightarrow - H + \sum_{j \neq i} E_{i}\\
D_{0,1,\dotsc ,\hat{i},\dotsc ,n} & \longleftrightarrow - E_{i} 
\end{align*}
Here $H$ and $\{E_{i}\}$ form the geometric basis for cohomology, namely $H$ is the hyperplane class and $E_{i}$ is the exceptional divisor over $p_{i}$. Note that 
\[
K_{X_{0}} = \sum_{\rho_{\alpha} \in \Sigma_{X_{0}} } D_{\alpha} = - (n+1)H + (n-1) \sum_{i=0}^{n}E_{i}.
\]
\subsection{The blowup at lines, $X_{1}$} Let $0\leq i < j \leq n$. To blowup the line $Z_{ij}$ in $X_{0}$ which is the proper transform of the line $l_{ij}$ in $\PP^{n}$ through $p_{i}$ and $p_{j}$, we subdivide the cone indexed by the compliment of $\{i,j \}$ in $\{0,\dotsc ,n \}$. Thus the $1$-skeleton of $\Sigma_{X_{1}}$ is 
\[
\Sigma^{1}_{X_{1}} = \{ \rho_{i}, \rho_{0,\dotsc ,\hat{i},\dotsc ,n}, \rho_{0,\dotsc ,\hat{i},\dotsc ,\hat{j},\dotsc ,n} : 0 \leq i < j \leq n \}.
\]
The cones of higher dimension are given by subdivision.

The isomorphism between toric and geometric bases of $H^{*}(X_{1},\ZZ)$ is given by
\begin{align*}
D_{i} & \longleftrightarrow - H + \sum_{j \neq i} E_{j} + \sum_{i\notin \{j,k \}} E_{jk}\\
D_{0,1,\dotsc ,\hat{i},\dotsc ,n} & \longleftrightarrow - E_{i} \\
D_{0,1,\dotsc ,\hat{i},\dotsc ,\hat{j},\dotsc ,n} & \longleftrightarrow -E_{ij}.
\end{align*}
Here $H$ and $E_{i}$ are as above, and $E_{ij}$ is the exceptional divisor above $Z_{ij}$. 

\subsection{The full permutohedral case, $\X (n+1)$}
Let $[n] = \{0,\dotsc ,n \}$. In general, we have 
\begin{align*}
D_{i} & \longleftrightarrow -H + \sum_{\substack{i \notin \alpha\\ \alpha \varsubsetneqq [n]}} E_{\alpha } \\
D_{\alpha} &\longleftrightarrow -E_{\alpha}, \text{ for all } \alpha \subsetneqq [n], |\alpha| > 1.
\end{align*}
Expressing the canonical bundle in both bases, we compute
\begin{align*}
K_{\hat{X}} &= \sum_{\alpha \subsetneqq [n] } D_{\alpha}\\
& = -(n+1)H + \sum_{\alpha \subsetneqq [n]  } (n-|\alpha |) E_{\alpha}. 
\end{align*}

The cohomology ring $H^{*}(X,\ZZ) = A^{*}(X)$ is given by
\[
A^{*}(X) \cong  \ZZ [\{D_{\alpha} \}] / I,
\]
where $I$ is the ideal generated by all \begin{enumerate}
\item [(i)] $D_{\alpha_{1}} \cdot D_{\alpha_{2}} \cdot \dotsb \cdot  D_{\alpha_{k}} $ for $\rho_{\alpha_{1}},\dotsc ,\rho_{\alpha_{k}}$ not in a cone of $\Sigma$; and 
\item [(ii)] $\displaystyle \sum_{\alpha} (e_{i}\cdot \rho_{\alpha}) D_{\alpha }$ for $e_{i}$ the $i^{th}$ standard basis vector in $\RR^{n}$.
\end{enumerate}

\subsection{The general case, $\X (m)$} The cohomology of $\hat{X}(m)$ (or $X_{j}(m)$) is now easy to express in terms of that of $\hat{X}(n+1)$ (or $X_{j}(n+1)$). Indeed, cohomology is generated in codimension two, and we have 
\[
H^2(\hat{X}(m)) = H^2(\hat{X}(n+1)) + \sum_{i > n+1} \mathbb{Z} \cdot [E_i].
\]

Further, as $E_{i}$ is far from a general line and the invariant subvarieties, we have 
\[
E_{i} \cdot H = E_{i} \cdot E_{\alpha} =0 ,
\]
for all  $i>n+1,\;  i \neq \alpha $.

\subsection{Toric Symmetry}\label{subsec: toric symmetry}
The fan $\Sigma_{\Pi_{n}}$ of the permutohedral variety $\X (n+1)=X_{\Pi_{n}}$ is thus symmetric about the origin. Reflecting through the origin sends $\rho_{\alpha }$ to its compliment
\[
\rho_{\alpha} \mapsto \rho_{[n]\setminus \alpha}.
\]
Any symmetry of the fan of a toric variety induces an isomorphism on the variety itself; call this map $\tau :\X \rightarrow \X$. We immediately see that action of $\tau$ on cohomology is given by 
\[
\tau^{*}D_{\alpha} = D_{[n]\setminus \alpha}.
\]
It is elementary to verify that $\tau$ is a resolution of the Cremona transform.

\section{Proof of Main Theorem}\label{section: proof of main theorem}

In the previous two sections we established the existence of the following
diagram.
\[
\xymatrix{\X = X_{n-2}(m) \ar[r]^{\tau}\ar[d]   & \X = X_{n-2}(m)\ar[d]   \\
          X_{n-3} (m) \ar@{..>}[d]  & X_{n-3} (m)\ar@{..>}[d] \\
          X_{1} (m)\ar[d] & X_{1} (m)\ar[d]\\
           X = X_{0}(m) \ar[d] & X = X_{0}(m)\ar[d] \\
           \PP^{n}\ar@{-->}[r] & \PP^{n}     }
\]

Note that Gromov-Witten invariants
are functorial, i.e. for any $\hat{\beta} \in
H_{2} (\X)$, 
\[
\langle \tau^{*} pt^{k}\rangle^{\X}_{\hat{\beta}} = 
\langle pt^{k}\rangle^{\X}_{\tau_{*}\hat{\beta}} .
\]
But $\tau^{*} pt = pt$ and thus
\[
\langle pt^{k}\rangle^{\X}_{\hat{\beta}} = 
\langle pt^{k}\rangle^{\X}_{\tau_{*}\hat{\beta}}.
\]

Furthermore, $\hat{\beta}$ must be of the form
\[
\hat{\beta} = d\hat{h} -\sum_{i=1}^{m}a_{i}\hat{e}_{i}.
\]
We compute
\[
\tau_{*} \hat{\beta} = \hat{\beta}'
\]
where
\[
\hat{\beta}' = d' \hat{h} -\sum_{i=1}^{m} a_{i}' \hat{e}_{i}
\]
is given by 
\begin{align*}
d' &= nd - (n-1) \sum_{i=1}^{n+1} a_{i}\\
a_{i}' &=
\begin{cases}
d - \displaystyle{\sum_{\substack{j \neq i \\ j \leq n+1 }}} a_{j} & 
1 \leq i\leq n+1\\
a_{i} & i >  n+1 . \qed 
\end{cases} 
\end{align*} 

This symmetry of $X$ descends to $\PP^{n}$ itself via the following proposition.
\begin{proposition}[Bryan-Leung]\label{proposition: cut down stack}
Let $\beta = dh - \sum a_{i}e_{i} \in H_{2} (X)$. Suppose that
$a_{i}=0$ for some $i$. Then
\[
\langle pt^{r} \rangle^{X}_{\beta} = \langle pt^{r-1}
\rangle^{X}_{\beta - e_{i}}.
\]
\end{proposition}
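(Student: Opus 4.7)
The plan is to use deformation invariance of Gromov-Witten invariants together with a judicious choice of geometric representative for the last point insertion. Writing $\beta = dh - \sum a_j e_j$ with $a_i = 0$, the first observation is that $\beta\cdot E_i = a_i = 0$, where $E_i$ is the exceptional divisor over $p_i$; hence an irreducible curve of class $\beta$ on $X$ is disjoint from $E_i$. By deformation invariance, the class $[pt]\in H^{2n}(X;\QQ)$ at the $r$-th marked point may be represented by the Poincar\'e dual of any specific point of $X$, and I would choose a general point $q\in E_i$.

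With this choice, any stable map $f:C\to X$ of class $\beta$ with $f(p_r)=q$ must contain a component $L\subset C$ which carries $p_r$ and maps into $E_i$, since no irreducible curve of class $\beta$ meets $E_i$. A dimension count, using that $q$ is general and that the lines in $E_i\cong\PP^{n-1}$ through $q$ form an $(n-2)$-dimensional family, shows that the generic contribution comes from $L$ being a single line of class $e_i$ attached to a main component $C_0$ of class $\beta - e_i$; configurations with $L$ of class $ke_i$ for $k\geq 2$, or with multiple exceptional bubbles, lie in virtual codimension at least one. Since $(\beta-e_i)\cdot E_i = 1$, the main component $C_0$ meets $E_i$ transversely at a single point, which serves as the node with $L$; the line $L$ in $E_i$ through $q$ and this node is then uniquely determined. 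Stripping off $L$ and forgetting $p_r$ gives a set-theoretic identification between the contributing locus and $\overline{M}_{0,r-1}(X,\beta - e_i)$, matching the remaining $r-1$ point constraints on either side.

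The main obstacle is promoting this identification on the level of moduli spaces to an equality of virtual integrals. One must check that the restriction of $[\overline{M}_{0,r}(X,\beta)]^{\vir}$ to the contributing boundary stratum agrees, after gluing and forgetting $L$, with $[\overline{M}_{0,r-1}(X,\beta-e_i)]^{\vir}$, with no excess intersection contributions, and with the correct automorphism bookkeeping for the rational bubble $L$ and its two special points. This is precisely what the cut-down moduli stack of Bryan-Leung~\cite{BL} is designed to accomplish: it parametrizes stable maps together with the data of the exceptional bubble, and its virtual class recovers the original integral. Once this obstruction-theoretic matching is in place, integration over the cut-down stack yields $\langle pt^r\rangle^X_\beta = \langle pt^{r-1}\rangle^X_{\beta-e_i}$.
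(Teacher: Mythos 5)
The paper offers no proof of this proposition: it is quoted from Bryan--Leung and justified only by the citation to~\cite{BL}. Your sketch is an outline of precisely the argument behind that citation --- represent the last point constraint by a general point $q$ of the exceptional divisor $E_i$, observe that $\beta \cdot E_i = 0$ forces the curve to break, peel off a line of class $e_i$, and invoke the cut-down moduli stack for the virtual-class comparison. Since you ultimately defer the obstruction-theoretic matching to the cut-down stack of~\cite{BL}, your proposal rests on the same reference the paper does, and the overall strategy is the intended one.

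One inference is invalid as written, however. You claim that a stable map $f:C\to X$ of class $\beta$ with $f(p_r)=q\in E_i$ ``must contain a component $L\subset C$ which carries $p_r$ and maps into $E_i$, since no irreducible curve of class $\beta$ meets $E_i$.'' The premise concerns irreducible curves of class $\beta$, but the components of a reducible stable map carry classes that merely sum to $\beta$; what actually follows from $\beta\cdot E_i=0$ is only that \emph{some} component maps into $E_i$. The component carrying $p_r$ could perfectly well be a non-exceptional component of class $\beta-ke_i$ (with $(\beta-ke_i)\cdot E_i=k>0$) whose image happens to pass through $q$, with the balancing class $ke_i$ carried by other components inside $E_i$ elsewhere on the curve. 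Ruling out a contribution from these strata requires an additional dimension count --- passing through the general point $q\in E_i$ is a codimension-$(n-1)$ condition on a class whose moduli space has already dropped by $k(n-1)$ in virtual dimension --- of exactly the kind you perform for multiple covers and multiple bubbles; this configuration should be added to that list. With that repair, the sketch is the standard Bryan--Leung argument and is consistent with what the paper cites.
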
 
In particular, in the case $a_{i}=0$ for all $i$, we have $$\langle pt^{n+3} \rangle^{\PP^{n}}_{nh} = \langle \; \rangle^{X}_{nh-e_{1}-\cdots e_{n+3}}.$$

\section{Nonexceptional classes}

In order for Theorem~\ref{theorem: cremona} to be of use, and in order to establish Corollary~\ref{cor: rational-normal-curve}, we must identify nonexceptional classes. We do so using degeneration of relative Gromov-Witten invariants. 

\subsection{Relative GW theory}\label{subsec: rel gw}

Let $Y \subset X $ be a nonsingular divisor. The Gromov-Witten theory of $X$ relative to $Y$ is defined in the algebraic setting in~\cite{Li02}. Let $\beta \in H_{2}(X,\ZZ)$ be a curve class such that $\beta \cdot [Y] \geq 0$, and let $\vec{\mu}$ be a partition of this nonnegative number. The moduli stack $\M_{g,n} (X/Y,\beta ,\mu)$ parameterizes stable relative maps with relative multiplicities determined by $\vec{\mu}$. Note that the target of a relative stable map may be a $k$-step degeneration of $X$ along $Y$; again see~\cite{Li02}.

Let $\delta_{i}$ be classes in $H^{*}(Y,\QQ)$. A cohomology-weighted partition $\mu$ is an unordered set of pairs 
\[
\{(\mu_{1},\delta_{i_{1}}),\dotsc ,(\mu_{s}, \delta_{i_{s}}) \},
\]
where $\mu_i$ is a part of $\vec{\mu}$.

Now, let $\gamma_{j}$ be classes in $H^{*}(X,\QQ)$. The genus-$g$, class $\beta$ Gromov-Witten invariant with insertions $\gamma_{1},\dotsc ,\gamma_{r}$ relative to the cohomology weighted partition $\mu$ is defined via integration against the virtual class
\[
\langle \gamma_{1},\dotsc ,\gamma_{r} \mid \mu \rangle^{X/Y}_{g,\beta} = \frac{1}{|Aut (\mu)|} \int_{[\M_{g,r}(X/Y, \beta ,\mu)]^{vir}} \prod_{j=1}^{r} \ev^{*}_{j} (\gamma_{j}) \cup \prod_{i=1}^{s}  ev^{*}_{i} \delta_{i}.
\]

\subsection{Degeneration}

Let $\cX \rightarrow \A^{1}$ be a family of projective schemes such that the fibers $\cX_{t}$ are smooth for $t\neq 0$ and the special fiber $\cX_{0}$ has two irreducible components 
\[
\cX_{0} = X \coprod_{Z} W
\]
intersecting transversally along a connected smooth divisor $Z \subset \cX_{0}$. Then, for a virtual dimension zero class $\beta \in H_{2}(\cX_{t},\ZZ)$, the degeneration formula of~\cite{Li02} yields
\begin{equation}\label{eq: degeneration}
\langle \;  \rangle^{\cX_{t}}_{\beta} = \sum_{\mu, \beta =\beta_{1}+\beta_{2}} \langle \; \mid \mu \rangle^{X/Z}_{\beta_{1}} \cdot C (\mu) \cdot \langle \;  \mid \mu^{\vee} \rangle^{W/Z}_{\beta_{2}},
\end{equation}
where $\mu$ remains a cohomology weighted partition, $\mu^{\vee}$ is its dual, and $C (\mu)$ is a combinatorial factor.

If $\beta$ does not split, and we only may consider $\beta =\beta_{1}$, we are forced into the empty partition $\mu = \emptyset $, in which case $C (\mu) =1$, our relative $\langle \; \mid \emptyset \rangle^{X/Z}_{\beta}$ invariant is in fact an absolute invariant, and we have 
\[
\langle \; \rangle^{\cX_{t}}_{\beta} = \langle \; \rangle^{X}_{\beta}.
\]

We now use this technique to equate invariance of $X_{j}$ and its blowup $X_{j+1}$ by using deformation to the normal cone to set up our degeneration.

\subsection{Proof of Corollary~\ref{cor: rational-normal-curve}}

For $0\le j < n-2$ let $Z_\alpha=Z_{i_0,\dots,i_{j+1}}$ be the proper transform of the $j+1$-dimensional subvariety of $\PP^n$, as above. Then $Z_\alpha$ is isomorphic to the blow up of $\PP^{j+1}$ at all the $0, 1, \dots, j-1$-dimensional $\TT$-equivariant subvarieties. Also as above, we continue to let $E_\alpha=E_{i_0,\dots,i_{j+1}}$ be the exceptional divisor of the blow up at $Z_\alpha$. Then 
\[
E_\alpha \cong \PP(L^{(n-j-1)\oplus})\cong Z_\alpha \times \PP^{n-j-2},
\]
where $L$ is the restriction of the line bundle 
\[
-H+\sum_{\substack{\emptyset \neq \zeta \subset [n] \\
|\zeta|< j+2  }} E_\zeta .
\]

Recall that $e_\alpha=e_{i_0,\dots,i_{j+1}}$ is the class of a line in the fibers of $E_\alpha$.  $Z_{j+1}=\cup_\alpha Z_\alpha\subset X_j$ where the union is over $\alpha \subset [n]$ and $|\alpha|= j+2$, and $X_{j+1}$ is defined by $X_{j+1}=Bl_{Z_{j+1}}X_j$. Then $A_1(X_{j+1})=H_{2}(X_{j+1},\ZZ)$ is generated by the collection $\{e_\alpha\}$. Our convention is that $e_\emptyset=h$ represents the proper transform of the class of line in $\PP^n$. We also use the same notation to denote the proper transform of all these classes in the subsequent stages of blow ups.

Now let $\alpha=\{i_0,\dots,i_{j+1}\}$ be fixed. For any $\gamma \subset \alpha$ with $|\gamma|\le j$, we denote by $_\alpha e_\gamma$ the generators of $A_1(Z_\alpha)$. Also let $\pi_\alpha: E_\alpha \to Z_\alpha$ be the projection and $S_\alpha \subset E_\alpha$ be the divisor given by $\PP (L^{(n-j-2)\oplus})$. We have $\O_{E_\alpha}(1)\sim \pi^*_\alpha L+S_\alpha$.
 
\begin{lemma}\label{relations}
We have the following relations in $A_1(X_{j+1})$:
\[
_\alpha e_\emptyset=h-\sum_\delta e_\delta + (j +1)e_\alpha, 
\]
where $\delta$ runs over all $\delta \subset \alpha$ and $|\delta|= j+1$. If $j \ge 1$, then for any $k\in \alpha$ 
\[
_\alpha e_k=e_k-\sum_\epsilon e_\epsilon + j e_\alpha ,
\]
 where $\epsilon$ runs over all $\epsilon \subset \alpha$, $|\epsilon|=j+1$, and $k\in \epsilon$. More generally, for any $\gamma \subset \alpha$ with $|\gamma|<j+1$, we have 
\[
_\alpha e_\gamma=e_\gamma-\sum_\epsilon e_\epsilon + (j+1-|\gamma|) e_\alpha ,
\] 
where $\epsilon$ runs over all $\epsilon \subset \alpha$ and $|\epsilon| = j+1$ and $\gamma \subset \epsilon$.
\end{lemma}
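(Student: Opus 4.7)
The plan is to verify all three formulas by pairing both sides against a basis of $A^1(X_{j+1})$. Since $X_{j+1}$ is smooth and rational, a $1$-cycle class is determined by its intersection numbers with a basis of the Picard group, which here consists of $H$ together with the exceptional divisor classes $E_\zeta$ for $\emptyset \neq \zeta \subsetneq [n]$, $|\zeta| \leq j+2$. I first promote ${}_\alpha e_\gamma \in A_1(Z_\alpha)$ to a class in $A_1(X_{j+1})$ by pushing forward along a constant section $s\colon Z_\alpha \hookrightarrow E_\alpha \cong Z_\alpha \times \PP^{n-j-2}$ corresponding to a generic point of $\PP^{n-j-2}$ (chosen so that $s(Z_\alpha)$ avoids $S_\alpha$), followed by the inclusion $E_\alpha \hookrightarrow X_{j+1}$.

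Three intersection-theoretic ingredients drive the computation. First, the projection formula gives $s_*({}_\alpha e_\gamma) \cdot \pi^* D = {}_\alpha e_\gamma \cdot D|_{Z_\alpha}$ for any $D \in A^1(X_j)$, where $\pi \colon X_{j+1} \to X_j$ is the structure map. Second, the iterated blowup construction ensures that $Z_\alpha$ and $Z_\beta$ have disjoint proper transforms in $X_j$ when $|\alpha|=|\beta|=j+2$ and $\alpha \neq \beta$, so that $s_*({}_\alpha e_\gamma) \cdot E_\beta = 0$. Third, from $E_\alpha|_{E_\alpha} = -c_1(\O_{E_\alpha}(1)) = -\pi_\alpha^* L - S_\alpha$ together with the genericity of $s$, one obtains $s_*({}_\alpha e_\gamma) \cdot E_\alpha = -{}_\alpha e_\gamma \cdot L|_{Z_\alpha}$. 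The required pairings on $Z_\alpha$ are then ${}_\alpha e_\gamma \cdot H|_{Z_\alpha} = \delta_{\gamma,\emptyset}$; ${}_\alpha e_\gamma \cdot E_\zeta|_{Z_\alpha} = -\delta_{\gamma,\zeta}$ for $\zeta \subset \alpha$ with $|\zeta| \leq j$; and ${}_\alpha e_\gamma \cdot E_\zeta|_{Z_\alpha}$ equal to $1$ if $\gamma \subset \zeta$ and $0$ otherwise for $\zeta \subset \alpha$ with $|\zeta|=j+1$. Summing yields ${}_\alpha e_\gamma \cdot L|_{Z_\alpha} = j+1 - |\gamma|$, using the combinatorial identity that $\alpha$ has exactly $j+2-|\gamma|$ subsets of size $j+1$ containing $\gamma$.

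Comparing these intersection numbers against those of the proposed right-hand sides, computed using the elementary pairings $h \cdot E_\zeta = 0$ and $e_\zeta \cdot E_\eta = -\delta_{\zeta,\eta}$, gives agreement in every case and produces in particular the coefficient $j+1-|\gamma|$ of $e_\alpha$. The main obstacle will be justifying the identification of $E_\zeta|_{Z_\alpha}$ (for $\zeta \subset \alpha$ with $|\zeta|=j+1$) as the strict transform of the hyperplane $Z_\zeta$ in $Z_\alpha$. The key observation is that the restriction of $X_j \to X_{j-1}$ to the proper transform of $Z_\alpha$ is the blowup of $Z_\alpha^{X_{j-1}}$ along the disjoint union of the hyperplanes $Z_\delta$ for $\delta \subset \alpha$ with $|\delta|=j+1$; each is a blowup along a smooth codimension-one subvariety and hence an isomorphism, under which the ambient $E_\zeta$ restricts to $Z_\zeta$ itself.
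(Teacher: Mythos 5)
Your proposal is correct and takes essentially the same approach as the paper: both arguments determine the class by pairing against the divisor basis $H$, $E_\zeta$, $E_\alpha$, and both extract the coefficient of $e_\alpha$ from $E_\alpha|_{E_\alpha}=-\pi_\alpha^* L - S_\alpha$ together with the intersection numbers of ${}_\alpha e_\gamma$ with $H$ and the $E_\zeta$ on $Z_\alpha$. You are merely more explicit about points the paper leaves implicit (realizing ${}_\alpha e_\gamma$ in $X_{j+1}$ via a constant section avoiding $S_\alpha$, and identifying $E_\zeta|_{Z_\alpha}$ for $|\zeta|=j+1$ with the strict transform of the hyperplane $Z_\zeta$), which is all to the good.
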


\begin{proof}
Suppose that ${_\alpha e}_\emptyset=\sum_{|\zeta|\le j+1}  x_\zeta e_\zeta + x e_\alpha $ and $_\alpha e_k=\sum_{|\zeta|\le j+1}  y_\zeta e_\zeta + y e_\alpha $. We intersect both sides of these by the divisor $E_\alpha$. We can see easily that $E_\alpha \cdot e_\zeta=0$ for any $\zeta$ with $|\zeta| \le j$, and $$E_\alpha \cdot {_\alpha e}_\emptyset=\O_{E_\alpha}(-1)\cdot {_\alpha e}_\emptyset=-\pi^*_\alpha L\cdot {_\alpha e}_\emptyset=1-(j+2)=-j-1$$ where $j+2=\sum_{\eta \subset \alpha, |\eta|=j+1} E_\eta  \cdot {_\alpha e}_\emptyset $. Similarly, $E_\alpha \cdot e_k=1-(j+1)$, where $j+1=\sum_{\eta \subset \alpha, |\eta|=j+1} E_\eta  \cdot {_\alpha e}_k $. Also note that $\O_{E_\alpha}(-1)\cdot e_\alpha=-1$ from which it follows that $x=j$ and $y=j-1$. The other unknowns are found by intersecting both sides of the relations above by the divisors $H$ and $E_\zeta$'s.
\end{proof}

Now consider the degeneration of $X_j$ into $X_{j+1} \coprod_{Z_\alpha} W_\alpha$ where 
\[
W_\alpha=\PP(\O_{Z_\alpha}\oplus L^{(n-j-1)\oplus}).
\]
For a virtual dimension zero class $\beta \in A_{2}(X_{j})$, the gluing formula Equation (\ref{eq: degeneration}) yields
\begin{equation}\label{eq: specific degeneration}
\langle \; \rangle^{X_{j}}_{\beta} = \sum_{\mu, \beta = \beta_{1}+\beta_{2}} \langle \; \mid \mu \rangle^{X_{j+1}/Z_\alpha}_{\beta_{1}} \cdot C (\mu) \cdot \langle \;  \mid \mu^{\vee} \rangle^{W_{\alpha}/Z_{\alpha}}_{\beta_{2}}.
\end{equation}
Let $\tilde{Z}_\alpha \subset W_\alpha$ be the copy of $Z_\alpha$ ``at infinity'' given by $\PP(\O_{Z_\alpha})\subset W_\alpha$. Denote by $_\alpha \te_\gamma$ the corresponding class in $\tilde{Z}_\alpha$.

\begin{lemma} The following relations hold in $A_1(W_\alpha)$:
$${_\alpha \te}_\emptyset= {_\alpha e}_\emptyset-(j+1)e_\alpha.$$ If $j\ge 1$ then $$ {_\alpha \te}_k= {_\alpha e}_k-je_\alpha.$$ More generally, for any $\gamma \subset \alpha$ with $|\gamma|<j+1$ we have $$_\alpha \te_\gamma={_\alpha e}_\gamma - (j+1-|\gamma|) e_\alpha. $$

In particular, $e_\alpha$ together with the elements of $\{{_\alpha \te}_\gamma\}_{\gamma \subset \alpha, |\gamma|<j+1 }$ gives a positive basis for $A_1(W_\alpha)$.
\end{lemma}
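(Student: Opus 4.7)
The plan is to express each difference ${_\alpha \te}_\gamma - {_\alpha e}_\gamma \in A_1(W_\alpha)$ as a multiple of the fiber class $e_\alpha$, then determine that coefficient by a single intersection with the divisor $E_\alpha \subset W_\alpha$. First, both ${_\alpha \te}_\gamma$ and ${_\alpha e}_\gamma$ push forward to the same class ${_\alpha e}_\gamma \in A_1(Z_\alpha)$ under the projection $\pi\colon W_\alpha \to Z_\alpha$: for ${_\alpha \te}_\gamma$ because $\tilde{Z}_\alpha = \PP(\O_{Z_\alpha})$ is a section of $\pi$ identified with $Z_\alpha$, and for ${_\alpha e}_\gamma$ because the underlying curve sits in $E_\alpha = \PP(L^{(n-j-1)\oplus})$ and maps isomorphically onto its image in $Z_\alpha$. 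Since the kernel of $\pi_*\colon A_1(W_\alpha) \to A_1(Z_\alpha)$ is generated by the fiber class $e_\alpha$ by the projective-bundle formula, we may write
$$
{_\alpha \te}_\gamma = {_\alpha e}_\gamma + k_\gamma \cdot e_\alpha
$$
for some integer $k_\gamma$.

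I would then determine $k_\gamma$ by intersecting both sides with $E_\alpha$. The left side vanishes: $\tilde{Z}_\alpha = \PP(\O_{Z_\alpha})$ and $E_\alpha = \PP(L^{(n-j-1)\oplus})$ are disjoint sub-bundles of $W_\alpha = \PP(\O_{Z_\alpha}\oplus L^{(n-j-1)\oplus})$, so $E_\alpha \cdot {_\alpha \te}_\gamma = 0$. On the right, the key input is the normal-bundle identification $N_{E_\alpha/W_\alpha} = \O_{E_\alpha}(+1)$, inverse to $N_{E_\alpha/X_{j+1}} = \O_{E_\alpha}(-1)$ as required by the deformation-to-the-normal-cone gluing. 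Consequently $E_\alpha \cdot e_\alpha = +1$, and the same calculation as in the proof of Lemma \ref{relations}, with all signs reversed, gives $E_\alpha \cdot {_\alpha e}_\gamma = j+1-|\gamma|$. Substituting yields $0 = (j+1-|\gamma|) + k_\gamma$, so $k_\gamma = -(j+1-|\gamma|)$, which is the claimed relation.

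For the positive-basis assertion I would invoke the projective-bundle decomposition $A_1(W_\alpha) = s_*A_1(Z_\alpha) \oplus \ZZ \cdot e_\alpha$ with $s$ the infinity section $\tilde{Z}_\alpha$. The classes $\{{_\alpha e}_\gamma\}_{\gamma \subsetneq \alpha,\, |\gamma| \leq j}$ form a basis of $A_1(Z_\alpha)$ from the iterated-blowup description of $Z_\alpha$ as a blowup of $\PP^{j+1}$, so their images $\{{_\alpha \te}_\gamma\} = \{s_*({_\alpha e}_\gamma)\}$ together with $e_\alpha$ form a basis of $A_1(W_\alpha)$. All of these classes are represented by honest irreducible curves (curves in the section $\tilde{Z}_\alpha$, or a line in a fiber of the projective bundle), hence the basis is positive.

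The main obstacle I anticipate is establishing the sign $N_{E_\alpha/W_\alpha} = \O_{E_\alpha}(+1)$, which requires carefully tracking the identification of $E_\alpha$ as the divisor class $c_1(\O_{W_\alpha}(1))$ inside the Grothendieck projectivization $\PP(\O \oplus L^{(n-j-1)\oplus})$; once that is settled, everything else is a direct variant of the intersection bookkeeping already carried out in Lemma \ref{relations}.
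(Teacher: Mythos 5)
Your proof is correct and is essentially the paper's argument: since $[E_\alpha]=c_1(\O_{W_\alpha}(1))$ in $W_\alpha=\PP(\O_{Z_\alpha}\oplus L^{(n-j-1)\oplus})$, your intersection with $E_\alpha$ (degree $0$ on curves in the disjoint section $\tilde Z_\alpha$, restriction $\O_{E_\alpha}(1)$ on $E_\alpha$, fiber class spanning $\ker\pi_*$) is the same computation the paper performs with $\O_{W_\alpha}(1)$. You simply make explicit two steps the paper leaves implicit, namely the pushforward/kernel argument and the sign of $N_{E_\alpha/W_\alpha}$ forced by the degeneration.
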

\begin{proof}
These follow from $\O_{W_\alpha}(1)\cdot {_\alpha \te}_\emptyset =\cO_Z(1)\cdot {_\alpha \te}_k=\O_Z(1)\cdot {_\alpha \te}_\gamma=0$ and $\O_{W_\alpha}(1)|_{Z_{\alpha}}\sim \O_{Z_\alpha}(1)$.
\end{proof}

Now in the degeneration formula if the class $\beta$ splits as $\beta_1+\beta_2$, where $\beta_2=\sum_\alpha (b_\alpha e_\alpha +\sum_\gamma {_\alpha a_\gamma}{_\alpha \tilde{e}}_\gamma)$ for nonnegative integers $b_\alpha$, and $_\alpha a_\gamma$ then by lemmas above $\beta_1$ is forced to be 
\begin{equation} \label{spliting}\beta_{1} = 
\beta-\sum_{|\alpha|=j+2}\left(b_\alpha e_\alpha+\sum_{ \gamma \subset \alpha, |\gamma|\le j} {_\alpha a}_\gamma\left( e_\gamma-\sum_{\gamma \subset \epsilon \subset \alpha, |\epsilon|=j+1} e_\epsilon\right) \right). 
\end{equation}

Now if $\beta$ is such that the coefficients of $h$, $e_k$'s do not afford splitting into effective curve classes, then in the degeneration formula (\ref{eq: specific degeneration}) there will be no contributions from GW theory of $W_\alpha$'s. For example, we can apply this in the following situation:

\begin{proposition} \label{exray}
If $\beta$ is an extremal ray in the cone of effective curves then in \eqref{spliting} we must have $\beta_1=\beta$.
\end{proposition}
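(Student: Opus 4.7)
The plan is to push both components $\beta_1,\beta_2$ of the splitting forward to $X_j$ and invoke the extremality of $\beta$. First, $\beta_1\in A_1(X_{j+1})$ and $\beta_2\in A_1(W_\alpha)$ are each effective (they arise from relative stable maps), and by the preceding lemma on the positive basis of $A_1(W_\alpha)$ the coefficients $b_\alpha$ and ${}_\alpha a_\gamma$ are all nonnegative. Let $\pi:=\pi_{j+1}$ be the blow-down $X_{j+1}\to X_j$, let $p:W_\alpha\to Z_\alpha$ be the bundle projection, and let $\iota:Z_\alpha\hookrightarrow X_j$ be the inclusion; set $q=\iota\circ p$. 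Both $\pi_*\beta_1$ and $q_*\beta_2$ are effective in $X_j$, and they sum to $\beta$. Using \eqref{spliting} together with $\pi_*(e_\alpha)=0$, and applying the relations of Lemma~\ref{relations} after projecting to $X_j$ (where the classes $e_\alpha$ with $|\alpha|=j+2$ vanish), one obtains
\[
q_*\beta_2\;=\;\sum_{|\alpha|=j+2}\sum_{\substack{\gamma\subset\alpha\\|\gamma|\le j}}{}_\alpha a_\gamma\Bigl(e_\gamma-\sum_{\substack{\gamma\subset\epsilon\subset\alpha\\|\epsilon|=j+1}}e_\epsilon\Bigr),\qquad \pi_*\beta_1\;=\;\beta-q_*\beta_2,
\]
and each summand $e_\gamma-\sum_\epsilon e_\epsilon$ is the pushforward to $X_j$ of a positive generator ${}_\alpha e_\gamma$ of $A_1(Z_\alpha)$.

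Next, extremality of $\beta$ in the cone of effective curves of $X_j$ forces each of $\pi_*\beta_1$ and $q_*\beta_2$ to be a nonnegative rational multiple of $\beta$; write $q_*\beta_2=\lambda\beta$ with $\lambda\ge 0$. The decisive step is to conclude $\lambda=0$. If $\lambda>0$, the equation $\lambda\beta=\sum{}_\alpha a_\gamma\,\iota_*({}_\alpha e_\gamma)$ expresses $\beta$ as a positive combination of effective classes, each the pushforward of a torus-invariant curve class from some $Z_\alpha$; extremality then forces $\beta$ itself to be proportional to one such $\iota_*({}_\alpha e_\gamma)$, i.e., $\beta$ is (a multiple of) the class of a curve fully contained in some $Z_\alpha$. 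This is exactly the scenario in which the $h,e_k$-coefficients of $\beta$ admit a splitting into effective classes, which the extremality hypothesis excludes. Hence $\lambda=0$ and $q_*\beta_2=0$.

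Finally, reading off the coefficient of each class $e_\gamma$ for $|\gamma|\le j$ (which does not appear inside any $\sum_\epsilon e_\epsilon$ term) yields $\sum_{\alpha\supset\gamma}{}_\alpha a_\gamma=0$, whence ${}_\alpha a_\gamma=0$ for all $\alpha,\gamma$ by nonnegativity. Thus $\beta_2=\sum_\alpha b_\alpha e_\alpha$ is a pure fiber class and $\beta_1=\beta-\sum_\alpha b_\alpha e_\alpha$; the effectiveness of $\beta_1$ in $X_{j+1}$, together with the observation that a generic representative of the extremal $\beta$ meets no $Z_\alpha$, forces $b_\alpha=0$ and gives $\beta_1=\beta$. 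The principal obstacle is the middle paragraph --- ruling out $\lambda>0$: this is where the geometric content of extremality enters, namely that $\beta$ is not the class of any curve sitting inside a $Z_\alpha$, which is the condition linking the proposition's statement to its application in Corollary~\ref{cor: rational-normal-curve}.
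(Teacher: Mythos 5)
Your overall architecture --- push $\beta_1$ and $\beta_2$ forward to $X_j$, observe that the images are effective and sum to $\beta$, and invoke extremality to force each summand onto the ray spanned by $\beta$ --- is sound, and is essentially a fleshed-out version of the paper's one-line argument (which works with effectivity of $\beta_1$ and the non-effectivity of $\sum_\epsilon e_\epsilon - e_\gamma$ from Lemma~\ref{relations}). The genuine problem is your middle paragraph, which you yourself flag as the decisive step. Having reduced to $q_*\beta_2 = \lambda\beta$ with $\lambda>0$, hence $\beta$ proportional to some $\iota_*({}_\alpha e_\gamma) = e_\gamma - \sum_\epsilon e_\epsilon$, you dismiss this case by asserting that it is ``exactly the scenario in which the $h,e_k$-coefficients of $\beta$ admit a splitting into effective classes, which the extremality hypothesis excludes.'' Extremality excludes no such thing: an extremal ray of the effective cone can perfectly well be generated by a curve lying inside some $Z_\alpha$. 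For $j=0$ the class $h-e_{i_0}-e_{i_1}$ of the strict transform of the torus-invariant line through $p_{i_0},p_{i_1}$ is precisely $\iota_*({}_\alpha e_\emptyset)$ and is a standard example of an extremal class. The ``coefficients of $h$, $e_k$ do not afford splitting'' condition is the separate hypothesis stated informally in the paper just before the proposition; it is not a consequence of extremality, so your argument is circular at exactly the point where content is needed. What actually closes this case in the application to Corollary~\ref{cor: rational-normal-curve} is that every $\iota_*({}_\alpha e_\gamma)$ involves only classes $e_\zeta$ with $\zeta\subset[n]$, whereas $\beta = nh - e_1-\cdots-e_{n+3}$ has nonzero coefficients on the non-toric classes $e_{n+2},e_{n+3}$ and so cannot be proportional to any of them; that check must be made explicitly.

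A second, smaller gap is your elimination of the fiber coefficients $b_\alpha$, which rests on the claim that ``a generic representative of the extremal $\beta$ meets no $Z_\alpha$.'' Nothing in the hypothesis guarantees this, and since $\pi_*e_\alpha=0$ the classes $b_\alpha e_\alpha$ are invisible after pushforward to $X_j$, so your $X_j$-level argument says nothing about them. One must instead argue in $X_{j+1}$ (or in $W_\alpha$) directly, e.g.\ by the same effectivity/extremality dichotomy applied to the effective classes $e_\alpha$, again checking separately that $e_\alpha$ is not proportional to $\beta$. As written, both of your nontrivial eliminations rest on assertions that are either unproved or false in general.
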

\begin{proof}
The class $\sum_\epsilon e_\epsilon-e_\gamma$ is not effective by Lemma \ref{relations}, and hence $\beta_1$ cannot be effective, unless  all the coefficients in the right hand side of \eqref{spliting} are zero.
\end{proof}

\begin{proof}[Proof of Corollary~\ref{cor: rational-normal-curve}]
It follows from Proposition \ref{exray}, because $\beta = nh - e_{1}-\dotsb -e_{n+3}$ is an extremal ray.
\end{proof}
\begin{remark}
In general, one would like to equate the invariants of a space and its blowup for any curve class which is far away from the exceptional divisors. However no such general theorem exists; in general it is very difficult to determine the behavior of Gromov-Witten invariants under rational maps. This holds true even in case the rational map is a blowup. However, note that the proof of Corollary~\ref{cor: rational-normal-curve} applies to much more than the rational normal curve alone. So one may use degeneration for cases of interest in the absence of a more universal theorem.
\end{remark}
\subsection*{Acknowledgments} A. G. was partially supported by NSF
grant DMS-1406788. 

D. K. thanks the mathematical community, including his coauthors, for
their patience. This project began at Berkeley, and D. K. has lectured
about successive iterations of this project at many institutions over many years. Warm
acknowledgments and gratitude are given to Jim Bryan, Renzo Cavalieri,
David Eisenbud, Jun Li, Melissa Liu, Dhruv Ranganathan, Bernd
Sturmfels, and Ravi Vakil. D. K. also thanks Yale for their hospitality
during Fall 2014, when this work was completed.

S. P. was partially supported by the Clay Mathematics Institute and
NSF grants DMS-1068689 and CAREER DMS-1149054. 

\bibliographystyle{plain}
\bibliography{cremona_symmetry}

\end{document}